\newcommand{\Aut}{\mathrm{Aut}}
\newcommand{\ad} {\mathrm {ad}}
\newcommand{\id} {\mathrm {id}}
\newcommand{\End} {\mathrm {End}}
\newtheorem{theorem}{Theorem}[section] 
\newtheorem{lemma}[theorem]{Lemma}     
\newtheorem{corollary}[theorem]{Corollary}
\newtheorem{definition}{Definition}
\newtheorem{notation}{Notation} 
\begin{document}

\title{Associative Subalgebras of Low-Dimensional Majorana Algebras} 

\author{A. Castillo-Ramirez\thanks{Funded by the \emph{Universidad de Guadalajara} and an Imperial College International Scholarship.} \\
\small{Imperial College London, Department of Mathematics.} \\
\small{South Kensington Campus, London, SW7 2AZ.} \\
\small{Email address: \texttt{ac1209@imperial.ac.uk}} }

\date{May 2014}

\maketitle

\begin{abstract}
A \emph{Majorana algebra} is a commutative nonassociative real algebra generated by a finite set of idempotents, called \emph{Majorana axes}, that satisfy some of the properties of the $2A$-axes of the Monster Griess algebra. The term was introduced by A. A. Ivanov in 2009 inspired by the work of S. Sakuma and M. Miyamoto. In the present paper, we revisit Mayer and Neutsch's theorem on associative subalgebras of the Griess algebra in the context of Majorana theory. We apply this result to determine all the maximal associative subalgebras of some low-dimensional Majorana algebras; namely, the Majorana algebras generated by two Majorana axes and the Majorana representations of the symmetric group of degree $4$ involving $3C$-algebras. \\

\textbf{Keywords:} Majorana representation, Monster group, Griess algebra.
\end{abstract}

\section{Introduction} \label{intro}

The largest of the sporadic simple groups, the Monster group $\mathbb{M}$, was constructed by Griess \cite{G82} as a group of automorphisms of a $196,884$-dimensional commutative nonassociative real algebra $V_{\mathbb{M}}$. 

One of the major obstacles in the examination of the Griess algebra is its nonassociativity. A natural approach to deal with this issue is the study of the associative subalgebras of $V_{\mathbb{M}}$; the seminal work in this direction was done by Meyer and Neutsch \cite{MN93}, who proved that every maximal associative subalgebra of $V_{\mathbb{M}}$ has an orthogonal basis of indecomposable idempotents. Furthermore, they conjectured that $48$ is the largest possible dimension of an associative subalgebra of $V_{\mathbb{M}}$. By showing that the length of any idempotent of $V_{\mathbb{M}}$ is at least $1$ (with respect to our scaling), Miyamoto \cite{Mi96} proved this conjecture. After this, connections between Virasoro frames, root systems, Niemeier lattices and maximal associative subalgebras of $V_{\mathbb{M}}$ have been explored in \cite{DLMN98}.      

Inspired by Sakuma's theorem \cite{S07}, Ivanov \cite[Ch.~8]{I09} introduced the following terminology. Suppose that $V$ is a commutative nonassociative real algebra with inner product generated by a finite set $\Omega$ of idempotents of length $1$. We say that $V$ is a \emph{Majorana algebra} with \emph{Majorana axes} $\Omega$ if axioms \textbf{M1-M7} of \cite[p.~2433]{IPSS10} are satisfied (with $\Omega$ in the place of $A$). The \emph{Majorana involutions} of $V$, which correspond to the restriction of the \emph{Miyamoto involutions} \cite{Mi96}, are the automorphisms of $V$ defined by axiom \textbf{M6}. If $G$ is a finite group generated by a set of involutions $T$ such that $T^g=T$, for all $g \in G$, a \emph{Majorana representation} of $(G,T)$ on a Majorana algebra $V$ is a linear representation of $G$ on $V$ together with a compatible injection of $T$ into the Majorana axes of $V$ (see \cite[Sec.~3]{IPSS10}). We denote by $a_t$ the Majorana axis corresponding to $t \in T$. 

The prototypical example of a Majorana algebra is $V_{\mathbb{M}}$ (see \cite[Ch.~8]{I09}). In \cite{IPSS10}, Sakuma's theorem was proved for Majorana algebras: this establishes that the Majorana algebras generated by two Majorana axes have eight possible isomorphism types. Majorana representations of various finite groups have been described in \cite{IPSS10}, \cite{IS12}, \cite{I11b}, \cite{I11a} and \cite{IS}. Notably, Seress \cite{Se12} designed and implemented an algorithm for the construction of $2$-closed Majorana representations.

In this paper, we discuss Meyer and Neutsch's theorem \cite{MN93} in the context of Majorana algebras, and we apply it to some low-dimensional cases. In particular, we find all the maximal associative subalgebras of the Majorana algebras generated by two Majorana axes using the information on idempotents obtained in \cite{CR13, LYY05}. Furthermore, we obtain all the idempotents and maximal associative subalgebras of the Majorana representations of $S_4$ of shapes $(2B,3C)$ and $(2A,3C)$ described in \cite[Sec.~4]{IPSS10}. 

In the context of Vertex Operator Algebras (VOAs), the sets of idempotents generating maximal associative subalgebras of $V_{\mathbb{M}}$ are relevant because they determine distinct \emph{Virasoro frames}, which are sets of vectors in the Moonshine module that generate framed sub-VOAs (see \cite{DGH98}). Few of the maximal associative subalgebras described in this paper have been previously obtained in \cite[Appendix~A]{LYY05} in the VOA context. 

\section{Associative subalgebras of Majorana representations} \label{sec:1}

Let $V=(V,\;.\;,(\;,\;))$ be a commutative nonassociative real algebra with inner product. For $S \subseteq V$, denote by $\langle\langle S \rangle\rangle$ the smallest subalgebra of $V$ containing $S$. If $\langle\langle S \rangle\rangle = V$, we say that $S$ generates the algebra $V$. If $v \in V$, the \emph{length} of $v$ is the non-negative real number $l(v):=(v,v)$. Say that $v \in V$ is \emph{idempotent} if $v \cdot v = v$.

Throughout this section, we assume that $V$ is a Majorana algebra with identity $\id \in V$. As we shall constantly use axioms \textbf{M1} and \textbf{M2} of \cite[p.~2433]{IPSS10}, we rewrite them below:

 \begin{description}
\item \textbf{M1} The inner and algebra products associate in $V$: $( u,v\cdot w) = ( u\cdot v,w)$, for every $u,v,w\in V$.
\item \textbf{M2} The \emph{Norton inequality} holds in $V$: $( u\cdot u,v\cdot v ) \geq ( u\cdot v,u\cdot v )$, for every $u,v\in V$. 
\end{description}

Define the \emph{adjoint transformation} of $v \in V$ as the endomorphism $ad_v$ of $V$ that maps $u \in V$ to $v \cdot u$.

\begin{lemma} \label{semisimple}
For any $v \in V$, the adjoint transformation $\ad_v \in \End(V)$ is orthogonally diagonalisable (i.e. there is an $(\;,\;)$-orthogonal basis of $V$ of eigenvectors of $\ad_v$).
\end{lemma}
\begin{proof}
By \textbf{M1}, the adjoint transformation of $v \in V$ is a self-adjoint endomorphism of $V$, so the result follows by the Spectral Theorem (see \cite[Sec.~10]{R08}).
\end{proof}

The following is another elementary consequence of \textbf{M1}.

\begin{lemma}\label{linearity}
Let $\{ x_i \in V : 1 \leq i \leq k \}$ be a finite set of idempotents. Let $\lambda_i \in \mathbb{R}$ and suppose that
\[ x = \sum \limits_{i=1}^k \lambda_i  x_i \]
is also an idempotent. Then,
\[ l(x) = \sum \limits_{i=1}^k \lambda_i l(x_i). \]
\end{lemma}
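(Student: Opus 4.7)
\emph{Plan.} My plan is to exploit the identity element $\id$ together with axiom \textbf{M1} to convert the quadratic quantity $(v,v)$ into the linear functional $v\mapsto(v,\id)$ whenever $v$ is idempotent, after which the statement reduces to bilinearity of the inner product in each argument.

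The key intermediate observation to establish is that for every idempotent $e\in V$ one has the equality $l(e)=(e,\id)$. To prove it I would use the idempotency $e=e\cdot e$, axiom \textbf{M1}, and the defining property of $\id$ as the algebra identity, in that order:
\[ (e,\id)=(e\cdot e,\id)=(e,e\cdot\id)=(e,e)=l(e). \]
Thus pairing an idempotent with the identity recovers its length, a trick that collapses the quadratic quantity into a linear one.

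With this formula in hand, I would first apply it to the idempotent $x$ itself, then expand $x=\sum_{i=1}^k\lambda_i x_i$ using linearity of $(\,,\,)$ in the first slot, and finally apply the auxiliary formula a second time to each $x_i$:
\[ l(x)=(x,\id)=\Bigl(\sum_{i=1}^k\lambda_i x_i,\,\id\Bigr)=\sum_{i=1}^k\lambda_i (x_i,\id)=\sum_{i=1}^k\lambda_i\, l(x_i). \]
This is exactly the claimed identity.

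I do not anticipate a real obstacle. The whole argument rests on noticing that, once an identity element is available, \textbf{M1} turns the length of an idempotent into a \emph{linear} functional of it; every other step is routine bilinearity. No appeal to \textbf{M2'} or to Lemma \ref{sub} is needed.
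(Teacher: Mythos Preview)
Your proposal is correct and matches the paper's own proof essentially step for step: both use \textbf{M1} together with the identity $\id$ to rewrite $l(e)=(e,e)=(e\cdot e,\id)=(e,\id)$ for any idempotent $e$, and then conclude by linearity of the inner product in the first slot. There is nothing to add.
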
 
\begin{proof}
The lemma follows by \textbf{M1} and the linearity of the inner product:
\[ l(x) = (x,x) =  (x \cdot x, \id ) = (x,\id) = \sum \limits_{i=1}^k \lambda_i (x_i, \id) = \sum \limits_{i=1}^k \lambda_i l(x_i). \] 
\end{proof}

\begin{lemma}\label{orthidem}
Let $x,y \in V$ be idempotents. The following are equivalent:
\begin{description}
\item[(i)] $(x,y) = 0$.
\item[(ii)] $x \cdot y =0$.
\item[(iii)] $x+y$ is idempotent.
\end{description}
\end{lemma}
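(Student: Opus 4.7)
The plan is to prove the cyclic chain (ii)$\Rightarrow$(iii)$\Rightarrow$(i)$\Rightarrow$(ii), since one direction is essentially a direct computation, a second uses only axiom \textbf{M1}, and only the remaining direction needs the stronger axiom \textbf{M2'}.

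First, I would establish the equivalence (ii)$\Leftrightarrow$(iii) by brute force expansion. Using idempotence of $x$ and $y$,
\[ (x+y)\cdot(x+y) = x\cdot x + 2(x\cdot y) + y\cdot y = x + y + 2(x\cdot y), \]
so $x+y$ is idempotent if and only if $x\cdot y = 0$. This handles two of the three directions between (ii) and (iii) simultaneously.

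Next, I would prove (ii)$\Rightarrow$(i) by a one-line application of axiom \textbf{M1} together with the idempotence of $x$:
\[ (x,y) = (x\cdot x, y) = (x, x\cdot y) = (x,0) = 0. \]

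The remaining implication (i)$\Rightarrow$(ii) is the only step that genuinely uses the strengthened Norton inequality \textbf{M2'}, and it is where I would expect readers to want the most care. Applying \textbf{M2'} to the pair $x,y$ and substituting the idempotence relations $x\cdot x = x$ and $y\cdot y = y$ gives
\[ (x,y) = (x\cdot x, y\cdot y) \geq (x\cdot y, x\cdot y) = l(x\cdot y) \geq 0. \]
Under hypothesis (i), the left-hand side vanishes, forcing $l(x\cdot y) = 0$. Since the length form is a genuine inner product (so positive definite), this yields $x\cdot y = 0$, completing the cycle. The only subtlety is confirming that we may treat $(\,,\,)$ as positive definite; this is implicit in its designation as an inner product in the setup of Definition \ref{Def Majorana}.
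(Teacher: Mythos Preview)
Your proof is correct and follows essentially the same approach as the paper: the key step (i)$\Rightarrow$(ii) via the Norton inequality and positive definiteness is identical, and (ii)$\Leftrightarrow$(iii) is the same direct expansion. The only minor differences are that the paper closes the cycle with (iii)$\Rightarrow$(i) via Lemma~\ref{linearity} (which uses the identity $\id$) rather than your (ii)$\Rightarrow$(i) via \textbf{M1}, and that your step (i)$\Rightarrow$(ii) in fact only needs the inequality \textbf{M2}, not the strengthened \textbf{M2'}.
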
 
\begin{proof}
If $(x,y) = 0$, \textbf{M2} implies that
\[ (x \cdot y, x \cdot y) \leq (x \cdot x, y \cdot y) = (x,y) =0. \]
Hence $x \cdot y = 0$ by the positive-definiteness of the inner product. It is clear that part \textbf{(ii)} implies \textbf{(iii)}. Lemma \ref{linearity} shows that \textbf{(iii)} implies \textbf{(i)}. 
\end{proof}

We say that two idempotents are \emph{orthogonal} if they satisfy the equivalent statements \textbf{(i)-(iii)} of Lemma \ref{orthidem}. 
\begin{notation}
For $\mu \in \mathbb{R}$ and $v \in V$, denote by $V_\mu^{(v)}$ the $\mu$-eigenspace of $\ad_v$, and
\[ d_\mu(v) := \dim(V_\mu^{(v)}). \]
\end{notation}
We say that $\mu \in \mathbb{R}$ is an eigenvalue of $v \in V$ if $\mu$ is an eigenvalue of the adjoint transformation of $v$. Let $\Aut(V)$ be the set of distance-preserving endomorphisms $\phi$ of $V$ such that $v^\phi \cdot u^\phi = (v \cdot u)^\phi$, for all $u,v \in V$.

\begin{lemma} \label{spectrum}
Let $x \in V$ be a non-zero non-identity idempotent. Then:
\begin{description}
\item[(i)] $d_\mu(x) \neq 0$, for $\mu \in \{0,1\}$.

\item[(ii)] For every $g \in \Aut(V)$, the spectrum of $x^{g}$ is equal to the spectrum of $x$.

\item[(iii)] If $\{ \mu_i : 1 \leq i \leq k \}$ is the spectrum of $x$, then $\{ 1 - \mu_i : 1 \leq i \leq k \}$ is the spectrum of the idempotent $\id-x$.
\end{description}
\end{lemma}
\begin{proof}
Part \textbf{(i)} follows because $x$ and $\id - x$ are $1$- and $0$-eigenvectors of $\ad_x$, respectively. Part \textbf{(ii)} follows since, for any $g \in \Aut(V)$, $v$ is an eigenvector of $\ad_x$ if and only if $v^g$ is an eigenvector of $\ad_{x^g}$. Part \textbf{(iii)} is straightforward.
\end{proof}

The main results of this paper hold for Majorana algebras that satisfy the following stronger version of axiom \textbf{M2}:
\begin{description}
\item ${\bf M2}'$ The Norton inequality holds for every $u,v\in V$, with equality precisely when the adjoint transformations $\ad_u$ and $\ad_v$ commute.
\end{description}

It was shown in \cite[Sec.~15]{C84} that ${\bf M2}'$ holds in $V_{\mathbb{M}}$.  The importance of ${\bf M2}'$ in our discussion lies in the following lemma.

\begin{lemma}\label{sub}
Let $V$ be a Majorana algebra that satisfies ${\bf M2}'$. Let $x \in V$ be an idempotent and $\mu \in \{0,1\}$. Then:
\begin{description}
\item[(i)] The eigenspace $V_\mu^{(x)}$ is a subalgebra of $V$.

\item[(ii)] The number of idempotents in $V_\mu^{(x)}$ is either infinite or at most $2^{d_{\mu}(x)}$. 
\end{description}
\end{lemma}
\begin{proof}
Let  $y \in V_\mu^{(x)}$. Observe that
\[ (x \cdot x, y \cdot y) = (x \cdot y, y) = (\mu y, y) = (\mu y , \mu y) = (x \cdot y, x \cdot y). \]
Therefore, ${\bf M2}'$ implies that $\ad_x$ and $\ad_{y}$ commute, and so, for any $y' \in V_\mu^{(x)}$,
\[ x \cdot (y \cdot y') = y \cdot (x \cdot y') = \mu (y \cdot y'). \]
This shows that $(y \cdot y') \in V_\mu^{(x)}$. Part \textbf{(i)} follows.

As $V_\mu^{(x)}$ is a subalgebra of $V$, its idempotents are in a bijective correspondence with the solutions of the system of $d_\mu(x) \times d_\mu(x)$ quadratic equations determined by the coordinates of the relation $v \cdot v - v=0$. Therefore, if the system has finitely many solutions, B\'ezout's theorem implies that $V_\mu^{(x)}$ has at most $2^{d_\mu(x)}$ idempotents. Part \textbf{(ii)} follows. 
\end{proof}

An idempotent is \emph{decomposable} if it may be expressed as a sum of at least two non-zero idempotents; otherwise, the idempotent is \emph{indecomposable}.

The following results were obtained in \cite{MN93} with $V=V_{\mathbb{M}}$; however, their proof applies, without change, to Majorana algebras that satisfy ${\bf M2}'$.

\begin{theorem}[Meyer, Neutsch]\label{indecomposable}
Let $V$ be a Majorana algebra with identity that satisfies ${\bf M2}'$. An idempotent $x \in V$ is indecomposable if and only if $d_1(x)=1$.
\end{theorem}

\begin{theorem}[Meyer, Neutsch] \label{Meyer-Neutsch}
Let $V$ be a Majorana algebra with identity that satisfies ${\bf M2}'$. Let $U$ be a finite-dimensional subalgebra of $V$. Then:
\begin{description}
\item[(i)] $U$ is associative if and only if $U$ has an orthogonal basis of idempotents.

\item[(ii)] $U$ is maximal associative if and only if $\id \in U$ and $U$ has an orthogonal basis of indecomposable idempotents.
\end{description}
\end{theorem}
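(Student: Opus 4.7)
The plan is to prove part (i) first and then leverage it for part (ii). The reverse implication of (i) is essentially routine: from an orthogonal basis $\{e_1,\dots,e_n\}$ of idempotents, Lemma \ref{orthidem} yields $e_i \cdot e_j = \delta_{ij} e_i$, and bilinearity reduces the algebra multiplication on $U$ to coordinate-wise multiplication, which is manifestly associative. The forward direction I would prove by induction on $\dim U$, with the key inputs being the self-adjointness of each $\ad_u$ (from M1) and the identity $(u,u) = (u\cdot u, \id)$, which together rule out any nonzero element of $U$ with square zero. For the inductive step, fix a nonzero $u \in U$ and decompose $U = \bigoplus_\lambda U_\lambda$ into orthogonal eigenspaces of $\ad_u|_U$. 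A one-line associativity check shows each $U_\lambda$ is itself a subalgebra of $U$. If more than one eigenvalue appears, each $U_\lambda$ is a proper subalgebra and the inductive hypothesis delivers orthogonal idempotent bases whose union is one for $U$. If $\ad_u|_U$ has a single eigenvalue $\lambda$, then $\lambda$ must be nonzero (else $u \cdot u = 0$ forces $u = 0$), so $e := u/\lambda$ is an idempotent acting as identity on $U$; picking any $v \in U$ not proportional to $e$ (whenever $\dim U > 1$) makes $\ad_v|_U$ non-scalar and returns us to the previous case.

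For part (ii), the forward implication splits into two steps. First, $U + \mathbb{R}\id$ is always associative when $U$ is, so maximality forces $\id \in U$. Second, if some basis element $e_i$ decomposed as $f+g$ with $f, g$ nonzero idempotents, the family $\{e_1,\dots,e_{i-1},f,g,e_{i+1},\dots,e_n\}$ would be an orthogonal family of idempotents whose span strictly contains $U$ and is associative by (i). Orthogonality of $f$ and $g$ follows by expanding $(f+g)^2 = f+g$; and for $j \neq i$, the relation $f = f \cdot e_i$ puts $f$ in $V_1^{(e_i)}$ while $e_j \in V_0^{(e_i)}$, so self-adjointness of $\ad_{e_i}$ gives $(f, e_j) = 0$, and Lemma \ref{orthidem} delivers $f \cdot e_j = 0$. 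For the converse, assume $\id \in U$ with indecomposable basis $\{e_i\}$, and consider any associative $U' \supseteq U$ with orthogonal idempotent basis $\{f_1,\dots,f_m\}$. Expanding $e_i = \sum_j a_{ij} f_j$ and demanding $e_i \cdot e_i = e_i$ forces $a_{ij} \in \{0,1\}$; indecomposability of $e_i$ then forces exactly one nonzero coefficient, so $\{e_i\} \subseteq \{f_j\}$. Expanding $\id$ in both bases (using $\id \cdot f_k = f_k$ to pin down every coefficient as $1$) gives $\sum_i e_i = \id = \sum_j f_j$, forcing the two bases to coincide, hence $U = U'$.

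The main obstacle is the forward direction of (i): upgrading the abstract associativity hypothesis to the concrete structural conclusion that $U$ admits an orthogonal basis of idempotents. A slicker-sounding alternative would be to invoke the Wedderburn structure theorem on the nilpotent-free commutative real algebra $U$, but ruling out $\mathbb{C}$-summands still requires essentially the same length argument, so the direct eigenspace induction is preferable. A secondary subtle point in (ii) is that the refinement $e_i = f + g$ may involve vectors outside $U$, and one must verify that M1 alone (without the stronger M2') produces the orthogonality $(f, e_j) = 0$ needed to graft $f$ and $g$ into an enlarged orthogonal family.
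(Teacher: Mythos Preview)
The paper does not supply its own proof of this theorem: it is stated as a result of Meyer and Neutsch with a citation to \cite{MN93}, accompanied only by the remark that their argument carries over to a Majorana representation with identity satisfying axiom \textbf{M2'}. So there is no in-paper proof to compare against; your proposal is providing what the paper omits.

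Your argument is correct and self-contained. For part (i) forward, the induction via eigenspace decomposition of $\ad_u|_U$ is the natural route: self-adjointness (from \textbf{M1}) gives diagonalisability on $U$, associativity makes each eigenspace a subalgebra and forces products between different eigenspaces to vanish, and the length identity $(u,u)=(u\cdot u,\id)$ kills nilpotents so that the single-eigenvalue case yields an identity element for $U$. The fallback to a second element $v$ when $\ad_u|_U$ is scalar is handled correctly. For part (ii), both directions are sound; the delicate point you flag---that a decomposition $e_i=f+g$ may leave $U$---is resolved exactly as it should be, using only \textbf{M1} to get $(f,e_j)=0$ and then Lemma~\ref{orthidem}. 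One small omission worth making explicit: when you take the union of the orthogonal idempotent bases of the $U_\lambda$, orthogonality across distinct eigenspaces follows because eigenspaces of the self-adjoint operator $\ad_u$ for distinct eigenvalues are orthogonal in the inner product, and then Lemma~\ref{orthidem} upgrades this to $e\cdot f=0$. You allude to this (``orthogonal eigenspaces'') but do not spell out the second step.

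It is worth noting that your argument uses only \textbf{M1}, \textbf{M2}, positive-definiteness, and the existence of $\id$; the stronger \textbf{M2'} is not invoked, which is slightly sharper than what the paper's framing suggests is needed.
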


Notice that if $B:=\{ x_i \in V : 1 \leq i \leq k\}$ is a set of pairwise orthogonal idempotents, then the associative subalgebra $U := \langle \langle B \rangle \rangle$ coincides with the linear span of $B$; in particular, the dimension of $U$ is $k$. If the idempotents of $B$ are indecomposable, all the non-zero idempotents of $U$ are finite sums of the idempotents of $B$; in particular, $U$ may have at most one orthogonal basis of indecomposable idempotents. 

When $x \in V$ is a non-zero non-identity idempotent, $\{ x, \id - x\}$ is an orthogonal basis of a two-dimensional associative subalgebra 
\[ V_x: = \langle \langle x, \id - x \rangle \rangle \leq V. \]
Define $V_0 := \langle\langle 0 \rangle\rangle$ and $V_{\id} := \langle\langle \id \rangle\rangle$.

\begin{definition}
We say that an associative subalgebra $U$ of $V$ is \emph{trivial associative} if $U = V_x$ for some idempotent $x \in V$.  
\end{definition}

\begin{corollary}\label{trivial max}
Let $x \in V$ be an idempotent. The trivial associative subalgebra $V_x \leq V$ is maximal associative if and only if $d_0(x) = d_1(x)=1$.
\end{corollary}
\begin{proof}
The result follows by Lemma \ref{spectrum} (iii), Theorems \ref{indecomposable}  and \ref{Meyer-Neutsch} (ii). 
\end{proof}

\begin{corollary} \label{cormax}
Let $B:=\{x_i \in V : 1 \leq i \leq k \}$ be a finite set of indecomposable idempotents. The subalgebra $U := \langle \langle B \rangle \rangle$ is maximal associative if and only if $\id = \sum \limits_{i=1}^k x_i$.
\end{corollary}
\begin{proof}
If $U$ is maximal associative, then $\id = \sum \limits_{i=1}^k x_i$ by Theorem \ref{Meyer-Neutsch} \textbf{(ii)} and the uniqueness of the identity in a commutative algebra. Suppose $\id = \sum \limits_{i=1}^k x_i$. Note that \textbf{M2} and the positive-definiteness of the inner product imply that $(x,y) \geq 0$, for every pair of idempotents $x,y \in V$. By Lemma \ref{linearity}, 
\[ l(\id) = \sum \limits_{i=1}^k l(x_i) = \sum \limits_{i=1}^k l(x_i) + \sum \limits_{i \neq j} (x_i, x_j). \] 
Therefore $(x_i, x_j)=0$, for any $i \neq j$. Hence, $U$ is maximal associative by Theorem \ref{Meyer-Neutsch} \textbf{(ii)}.
\end{proof}

The following lemmas will be useful in our discussion of low-dimensional Majorana algebras.

\begin{lemma}\label{count assoc}
Let $x \in V$ be an indecomposable idempotent and suppose that $V_0^{(x)}$ has finitely many idempotents. Then:
\begin{description}
\item[(i)] If $d_0(x)=1$, then $x$ is not contained in any three-dimensional associative subalgebra of $V$.

\item[(ii)]  If $d_0(x) \geq 2$, then $x$ is contained in at most $2^{d_0(x)-1} - 1$ three-dimensional maximal associative subalgebras of $V$. 
\end{description}
\end{lemma}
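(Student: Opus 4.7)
The plan is to count idempotents orthogonal to $x$ by using that, by Lemmas~\ref{sub} and \ref{orthidem}, they all lie in the subalgebra $V_0^{(x)}$, which contains at most $2^{d(x)}$ idempotents by Lemma~\ref{spectrum}~(iv). In particular, the number of \emph{non-zero} idempotents in $V_0^{(x)}$ is at most $2^{d(x)} - 1$.

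For part (i), I would observe that any three-dimensional associative subalgebra with $x$ in its orthogonal basis has, by Theorem~\ref{Meyer-Neutsch}~(i), the form $\langle\langle x, y, z \rangle\rangle$ with $y, z$ two distinct non-zero idempotents orthogonal to $x$. Since $d(x) = 1$ yields at most $2^{1} - 1 = 1$ non-zero idempotent in $V_0^{(x)}$, such a pair cannot exist.

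For part (ii), the strategy is to set up a correspondence between three-dimensional maximal associative subalgebras with $x$ in their orthogonal basis and unordered pairs $\{y, z\}$ of non-zero idempotents in $V_0^{(x)}$ satisfying $y + z = \id - x$. The forward direction uses Theorem~\ref{Meyer-Neutsch}~(ii) to place $\id$ inside the subalgebra: expanding $\id$ in the orthogonal idempotent basis $\{x, y, z\}$ and multiplying by each basis element forces $\id = x + y + z$. Counting such pairs then reduces to analysing the involution $y \mapsto (\id - x) - y$ on the set of non-zero idempotents of $V_0^{(x)}$ distinct from $\id - x$ (which is itself a non-zero idempotent of $V_0^{(x)}$, since $d(x) \geq 2$ forces $x \neq \id$). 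This set has at most $2^{d(x)} - 2$ elements, and a fixed point would satisfy $y = (\id - x)/2$, which is not idempotent when $x \neq \id$; hence the involution is fixed-point-free and the set splits into at most $2^{d(x)-1} - 1$ pairs.

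The main obstacle is verifying that this involution is well-defined on the prescribed set, namely that $z = (\id - x) - y$ is itself a non-zero idempotent of $V_0^{(x)}$ whenever $y$ is (and $y \neq \id - x$), together with the orthogonality of $y$ and $z$. Both facts fall out routinely from $x \cdot y = 0$ and Lemma~\ref{orthidem}, since $y + z = \id - x$ is idempotent, so no deeper input is required than the two preparatory lemmas already established.
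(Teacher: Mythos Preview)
Your proof is correct and follows essentially the same route as the paper. Both arguments hinge on the observation that if $\{x,y,z\}$ is the orthogonal basis of a three-dimensional maximal associative subalgebra, then $y+z=\id-x$, so the pair $\{y,z\}$ is determined by either member; the paper phrases this as ``disjoint two-sets of non-zero idempotents of $V_0^{(x)}$'' and divides $2^{d(x)}-2$ by $2$, while you phrase it via the fixed-point-free involution $y\mapsto(\id-x)-y$ on the same set. Your verification that this involution is well-defined on all of $V_0^{(x)}\setminus\{0,\id-x\}$ (i.e.\ that $(\id-x)-y$ is automatically idempotent) is a little more than is strictly needed for the upper bound, but it is correct and makes the counting cleaner.
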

\begin{proof}
Part \textbf{(i)} follows by Corollary \ref{cormax}. Let $d_0(x) \geq 2$ and let $U \leq V$ be a three-dimensional maximal associative subalgebra such that $x \in U$. By Theorem \ref{Meyer-Neutsch} \textbf{(ii)}, we may find an orthogonal basis of $U$ of indecomposable idempotents: $\{ x, y, z \}$, where $y,z \in V_0^{(x)}$. If there is an idempotent $w \in V_0^{(x)}$ such that $\langle\langle x, y, w \rangle\rangle$ is maximal associative, then Theorem \ref{Meyer-Neutsch} \textbf{(ii)} implies that
\[ x + y + z = \id = x + y + w, \]
so $z = w$. This shows that distinct three-dimensional maximal associative subalgebras of $V$ containing $x$ correspond to distinct disjoint two-sets of non-zero idempotents of $V_0^{(x)}$. Lemma \ref{sub} \textbf{(ii)} implies that  there are at most $\frac{1}{2} (2^{d_0(x)} - 2)$ disjoint two-sets of non-zero idempotents in $V_0^{(x)}$. 
\end{proof}

\begin{lemma}\label{max4}
Suppose that, for every idempotent $x \in V$, the space $V_0^{(x)}$ has finitely many idempotents, and $d_0(x) \leq 2$. Then the dimension of every associative subalgebra of $V$ is at most three.
\end{lemma}
\begin{proof}
If $\{ x, y, z, w \}$ is a set of four pairwise idempotents of $V$, then $x$ is orthogonal to $7$ non-zero idempotents: $y$, $z$, $w$, $y+z$, $y+w$, $z+w$ and $y+z+w$. However, as $V_0^{(x)}$ has finitely many idempotents, and $d_0(x) \leq 2$, Lemma \ref{sub} \textbf{(ii)} implies that $x$ is orthogonal to at most $2^{d_0(x)} -1 \leq 3$ non-zero idempotents. The result follows.
\end{proof}

If $U$ is a subalgebra of $V$, we introduce the notation
\[ [U] : = \{ U^g : g \in \Aut(V) \} . \] 
Because of Lemma \ref{semisimple}, we know that, for every $v \in V$, the geometric and algebraic multiplicities of the eigenvalues of $\ad_v$ coincide. In the following sections, we calculate the spectrum of every idempotent of some low-dimensional Majorana algebras in order to obtain all their maximal associative subalgebras. Because of Lemma \ref{spectrum} \textbf{(ii)}, we organise these spectra in terms of $\Aut(V)$-orbits. We consider only half of the non-zero non-identity idempotents, as the spectra of the remaining idempotents may be found using Lemma \ref{spectrum} \textbf{(iii)}. Although we require only the multiplicities of $0$ and $1$, we believe that the full spectrum of each idempotent may be of general interest. 


\section{The Norton-Sakuma Algebras} \label{NS-algebras assoc}

If $t,g$ are $2A$-involutions of $\mathbb{M}$, it is known (see \cite{C84,GMS89}) that the product $tg$ lies in one of the following conjugacy classes of $\mathbb{M}$:
\[ 1A, \ 2A, \ 2B, \ 3A, \ 3C, \ 4A, \ 4B, \ 5A, \ 6A. \]
In \cite{C84,N96}, Norton described the subalgebras of $V_{\mathbb{M}}$ generated by the $2A$-axes corresponding to $t$ and $g$, and he labelled them according to the conjugacy class of $tg$.
 
Let $V$ be a Majorana algebra. Say that $V$ is a \emph{Norton-Sakuma algebra} if $V$ is generated by exactly two of its Majorana axes. As proved in \cite{IPSS10}, Sakuma's theorem implies that the Norton-Sakuma algebras coincide with the non-trivial subalgebras of $V_{\mathbb{M}}$ described by Norton. In this context, the Norton-Sakuma algebras are described in \cite[Table~3]{IPSS10}.

Denote by $V_{NX}$ the Norton-Sakuma algebra of type $NX$. Denote by $\Aut(NX)$ and $\id_{NX}$ the automorphism group and identity of $V_{NX}$, respectively. The groups $\Aut(NX)$ are described explicitly in \cite[Theorem~4.1]{CR13}, while the identities $\id_{NX}$ are given by \cite[Table~2]{CR13}. Note that, except for $V_{2B}$, none of the Norton-Sakuma algebras is associative. As $V_{NX}$ may be embedded in $V_{\mathbb{M}}$, the Norton-Sakuma algebras clearly satisfy axiom ${\bf M2}'$.

The complete list of idempotents of all the Norton-Sakuma algebras was obtained in \cite{CR13}. We shall constantly use this information to prove the results of this section. 

\subsection{Associative Subalgebras of $V_{2A}$, $V_{3A}$ and $V_{3C}$}

The following result examines the Norton-Sakuma algebras of types $2A$, $3A$ and $3C$. 

\begin{lemma}
Let $NX \in \{2A, 3A, 3C\}$. A subalgebra of $V_{NX}$ is maximal associative if and only if it is trivial associative.
\end{lemma}
\begin{proof}
All the idempotents of $V_{NX}$ are described in \cite[429--431]{CR13}. Direct calculations show that $d_0(x) = d_1(x) = 1$ for every non-zero non-identity idempotent $x \in V_{NX}$. The result follows by Corollary \ref{trivial max}.
\end{proof}

\subsection{Associative Subalgebras of $V_{4A}$}

Let $\{a_0, a_1, a_{-1}, a_2, v_{\rho}\}$ be the basis of $V_{4A}$ as in \cite[Table~3]{IPSS10}. It was shown in \cite[Sec. 3.2]{CR13} that $V_{4A}$ has an infinite family of idempotents of length $2$: for any $\lambda \in [  -\frac{3}{5},1]$, there is an idempotent $y^{(1)}_{4A} ( \lambda ) \in V_{4A}$. Direct calculations show that the spectrum of  $y^{(1)}_{4A} ( \lambda )$ is
\[ \{ 0, \ 1, \ \frac{1}{2}, \ h\left( \lambda \right) , \ \overline{h\left( \lambda \right) }\},  \] 
where $\overline{h\left( \lambda \right) }$ is the conjugate in $\mathbb{Q}\left( \sqrt{-15\lambda ^{2}+6\lambda +9}\right) $ of 
\[ h(\lambda) := \frac{1}{2^5} ( 17 - 5 \lambda -  5 \sqrt{-15\lambda^2 + 6 \lambda + 9}). \]  

\begin{lemma}\label{infinite}
Consider the infinite family $\{y^{(1)}_{4A} ( \lambda ) : \lambda \in [  -\frac{3}{5},1] \}$ of idempotents of $V_{4A}$.
\begin{description}
\item[(i)] For any $\lambda \in  [  -\frac{3}{5},1]$, $\lambda \notin \{  0, \frac{2}{5}\}$, the trivial associative algebra $V_{y^{(1)}_{4A} ( \lambda )}$ is maximal associative.
\item[(ii)] The idempotent $y^{(1)}_{4A} \left( \frac{2}{5} \right)$ is indecomposable with a two-dimensional $0$-eigenspace.
\end{description}
\end{lemma}
\begin{proof}
The equation $h(\lambda)= 1$ has no solutions, while $\overline{h\left( \lambda \right)} = 1$ has the unique solution $\lambda=0$. On the other hand, the equation $h(\lambda)= 0$ has the unique solution $\lambda = \frac{2}{5}$, while $\overline{h\left( \lambda \right)} = 0$ has no solutions. Therefore $d_0(y^{(1)}_{4A} ( \lambda )) = d_1(y^{(1)}_{4A} ( \lambda ))=1$ for any $\lambda \in  [  -\frac{3}{5},1]$, $\lambda \notin \{  0, \frac{2}{5}\}$. Hence, part \textbf{(i)} follows by Corollary \ref{trivial max}. Part \textbf{(ii)} follows by the previous description of the spectrum of $y^{(1)}_{4A} \left( \frac{2}{5} \right)$. 
\end{proof}

Table \ref{Spec4A} gives the spectra of the $\Aut(4A)$-orbits of the non-zero non-identity idempotents of $V_{4A}$, where $y^{(2)}_{4A}$ is defined in \cite[Sec.~3.3]{CR13}.

\begin{table}[!h]
\setlength{\tabcolsep}{10pt}
\renewcommand{\arraystretch}{1.7}
\centering
\begin{tabular}{llll}
\hline
Orbit & Size & Length & Spectrum \\ \hline
$\left[ a_{0}\right] $  & $4$ & $1$ & $\left\{ 0,0,1,\frac{1}{4},\frac{1}{32}\right\}$ \\ 
$[ y^{(1)}_{4A}(\lambda) ]$ & $2$ & $2$ & $\{0, 1, h(\lambda), \overline{h\left( \lambda \right)} \}$ \\
$[ y^{(2)}_{4A} ]$ & $4$ & $\frac{12}{7}$ & $\left\{ 0,1,\frac{1}{14},\frac{5}{14},\frac{6}{7}\right\}$  \\ 
\hline
\end{tabular}
\caption{Spectra of the idempotents of $V_{4A}$.}
\label{Spec4A}
\end{table}

\begin{lemma}\label{sub4A}
The following subalgebras of $V_{4A}$ are maximal associative:
\[ \left\langle \left\langle a_0, \ a_2, \ \id_{4A} - a_0 - a_{2} \right\rangle \right\rangle  \text{ and }  \left\langle \left\langle a_1, \ a_{-1}, \ \id_{4A} - a_1 - a_{-1} \right\rangle \right\rangle.\]
\end{lemma}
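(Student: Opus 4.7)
The plan is to apply Theorem \ref{Meyer-Neutsch} (ii) directly: I will show that each of the two subalgebras admits an orthogonal basis of indecomposable idempotents summing to $\id_{4A}$.

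For the first subalgebra $U := \langle\langle a_t, a_{g_2}, \id_{4A} - a_t - a_{g_2} \rangle\rangle$, I begin by reading off from Table \ref{Norton-Sakuma algebras} that $a_t \cdot a_{g_2} = 0$ and $(a_t, a_{g_2}) = 0$, so $a_t$ and $a_{g_2}$ are orthogonal idempotents. Set $y := \id_{4A} - a_t - a_{g_2}$; this is idempotent because $a_t + a_{g_2}$ is idempotent by Lemma \ref{orthidem} and $\id_{4A} - e$ is idempotent whenever $e$ is. A one-line computation using axiom \textbf{M1} and the idempotency of $a_t$ shows
\[ a_t \cdot y = a_t \cdot \id_{4A} - a_t \cdot a_t - a_t \cdot a_{g_2} = a_t - a_t - 0 = 0, \]
and similarly $a_{g_2} \cdot y = 0$. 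Therefore $\{a_t, a_{g_2}, y\}$ is a pairwise orthogonal set of non-zero idempotents, which is automatically linearly independent, and by Theorem \ref{Meyer-Neutsch} (i), $U$ is associative with this set as an orthogonal basis.

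To upgrade to maximal associative via Theorem \ref{Meyer-Neutsch} (ii), I need each basis idempotent to be indecomposable and $\id_{4A} \in U$. The identity is in $U$ by construction, since $a_t + a_{g_2} + y = \id_{4A}$. The axes $a_t$ and $a_{g_2}$ are indecomposable by the corollary to Proposition \ref{indecomposable}. Crucially, the paper has just observed that $y = y^{(1)}_{4A}(2/5)$, so Lemma \ref{infinite} (ii) supplies the indecomposability of $y$. This concludes the argument for the first subalgebra.

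For the second subalgebra, the cleanest route is to invoke the automorphism $\phi_{4A} = (a_t, a_g)(a_{g_{-1}}, a_{g_2}) \in \Aut(4A)$ listed in the preceding proposition. Since automorphisms fix the identity of a unital algebra, $\phi_{4A}$ sends $\id_{4A} - a_t - a_{g_2}$ to $\id_{4A} - a_g - a_{g_{-1}}$ and therefore carries $U$ isomorphically onto $\langle\langle a_g, a_{g_{-1}}, \id_{4A} - a_g - a_{g_{-1}} \rangle\rangle$; since associativity and maximality are both preserved by automorphisms, the second subalgebra is also maximal associative. The only non-routine step is the appeal to Lemma \ref{infinite} (ii) for the indecomposability of $y$; everything else is bookkeeping with axiom \textbf{M1} and the table of products.
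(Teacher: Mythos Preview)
Your proof is correct and follows essentially the same route as the paper: verify pairwise orthogonality, note that the three idempotents sum to $\id_{4A}$, check indecomposability, and invoke Theorem \ref{Meyer-Neutsch}. Your argument is in fact slightly more careful than the paper's: you explicitly cite Lemma \ref{infinite} (ii) for the indecomposability of $y = \id_{4A} - a_t - a_{g_2}$, whereas the paper's proof refers only to Table \ref{Spec4A}, which lists the spectra of the idempotents \emph{outside} the infinite family and so does not literally cover $y$. Your use of the automorphism $\phi_{4A}$ to transport the conclusion to the second subalgebra is a clean shortcut the paper does not spell out, but the paper's symmetric treatment of both subalgebras amounts to the same thing.
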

\begin{proof}
The sum of the idempotents generating each subalgebra is $\id_{4A}$. Observe that 
\[ y^{(1)}_{4A} \left( \frac{2}{5} \right) = \id_{4A} - a_0 - a_{2} \ \text{ and } \  y^{(1)}_{4A} \left( \frac{2}{5} \right)^{\phi_{4A}} = \id_{4A} - a_1 - a_{-1},   \] 
where $\phi_{4A}$ is the automorphism of $V_{4A}$ that transposes $a_0$ and $a_1$. Therefore, these idempotents are indecomposable by Table \ref{Spec4A} and Theorem \ref{indecomposable}. The result follows by Corollary \ref{cormax}.
\end{proof}

\begin{lemma}\label{Max4A}
The dimension of every associative subalgebra of $V_{4A}$ is at most three.
\end{lemma}
\begin{proof}
Because $\{ y^{(1)}_{4A} ( \lambda ) : \lambda \in [  -\frac{3}{5},1] \}$ is the only infinite family of idempotents in $V_{4A}$ (see \cite[Proposition~3.9]{CR13}), Lemma \ref{infinite} implies that, for every idempotent $x \in V_{4A}$, the algebra $V_0^{(x)}$ has finitely many idempotents. As we see in Table \ref{Spec4A}, $d_0(x) \leq 2$ for every idempotent $x \in V_{4A}$, so the result follows by Lemma \ref{max4}.  
\end{proof}

\begin{lemma}
The Norton-Sakuma algebra of type $4A$ contains infinitely many maximal associative subalgebras. However, it contains only two non-trivial maximal associative subalgebras.
\end{lemma}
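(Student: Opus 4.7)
The plan is to establish the two assertions independently. The first one is an immediate consequence of Lemma~\ref{infinite}~(i): for every $\lambda$ in $[-3/5,1]$ outside the two excluded values, the two-dimensional trivial algebra $V_{y^{(1)}_{4A}(\lambda)}$ is maximal associative, and since $\lambda$ is recoverable as the $v_\rho$-coordinate of $y^{(1)}_{4A}(\lambda)$, these idempotents---and hence their associated subalgebras---are pairwise distinct, producing an uncountable supply of maximal associative subalgebras.

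For the second assertion, the first step is to pin down the dimension of a non-trivial maximal associative subalgebra $U$.  Any maximal associative subalgebra of dimension at most two is spanned by $\{x, \id_{4A}-x\}$ for some idempotent $x$, hence equals $V_x$ and is trivial.  On the other hand, Lemma~\ref{infinite}~(i) guarantees that $V_0^{(x)}$ has finitely many idempotents for every idempotent $x \in V_{4A}$, so Lemma~\ref{max4} bounds the dimension of $U$ from above by three.  Therefore $U$ is three-dimensional, and Theorem~\ref{Meyer-Neutsch}~(ii) equips it with an orthogonal basis $\{x_1,x_2,x_3\}$ of indecomposable idempotents satisfying $x_1+x_2+x_3=\id_{4A}$, with Lemma~\ref{count assoc}~(i) forcing $d(x_i)\geq 2$ for each $i$.

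The main obstacle is to enumerate all idempotents $x \in V_{4A}$ with $d(x)\geq 2$.  I would first read off Table~\ref{Spec4A} and observe that among the finite orbits only $[a_t]$ qualifies, contributing the four Majorana axes $a_t,a_g,a_{g_{-1}},a_{g_2}$; in every other finite orbit, $0$ is a simple eigenvalue.  I would then scan the infinite family: its spectrum $\{0,1,\frac{1}{2},h(\lambda),\overline{h(\lambda)}\}$, combined with the root analysis inside the proof of Lemma~\ref{infinite}, shows that $0$ acquires multiplicity two only when $h(\lambda)=0$, which occurs exclusively at $\lambda=2/5$ and produces $y:=\id_{4A}-a_t-a_{g_2}$.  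Finally, because the orbit $[v_\rho]$ has size one, $\phi_{4A}$ fixes $v_\rho$, so applying it to the family merely swaps the coefficients of $a_t+a_{g_2}$ and $a_g+a_{g_{-1}}$ while preserving $\lambda$; this yields the single additional idempotent $y':=\phi_{4A}(y)=\id_{4A}-a_g-a_{g_{-1}}$, which has $d(y')=2$ by Lemma~\ref{spectrum}~(ii).

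With the six idempotents $a_t,a_g,a_{g_{-1}},a_{g_2},y,y'$ identified, the counting step is routine.  Lemma~\ref{count assoc}~(ii) applied with $d=2$ shows that each of them lies in at most $2^{d-1}-1=1$ three-dimensional maximal associative subalgebra, so the total number of incidence pairs (idempotent, containing subalgebra) is at most six.  Since each three-dimensional maximal associative subalgebra accounts for three such pairs, there are at most $6/3=2$ of them, and Lemma~\ref{sub4A} already exhibits two, so the bound is attained.
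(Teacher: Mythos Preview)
Your proof is correct and follows essentially the same approach as the paper: both arguments use Lemma~\ref{infinite} for the first assertion, then identify the idempotents with $d(x)\geq 2$ (the orbits $[a_t]$ and $[\id_{4A}-a_t-a_{g_2}]$) via Table~\ref{Spec4A} and the spectrum of the infinite family, and conclude with Lemma~\ref{count assoc} and Lemma~\ref{sub4A}. You are simply more explicit than the paper in two places---checking the $\phi_{4A}$-images of the infinite family and carrying out the final incidence count $6/3=2$---but the underlying logic is identical.
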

\begin{proof}
The first part of this lemma follows by Lemma \ref{infinite}. By Table \ref{Spec4A} and Lemma \ref{semisimple}, only the indecomposable idempotents $[a_0] \cup [ \id_{4A} - a_0 - a_{2}]$ have a two-dimensional $0$-eigenspace, so these are the only idempotents whose trivial associative subalgebra is not maximal. By Lemma \ref{count assoc}, each one of these idempotents is contained in at most one three-dimensional maximal associative subalgebra. Lemma \ref{sub4A} describes such subalgebras. The result follows by Lemma \ref{Max4A}, as there are no associative subalgebras of $V_{4A}$ of dimension greater than $3$. 
\end{proof}

\subsection{Associative Subalgebras of $V_{4B}$}

Let $\{ a_0, a_1, a_{-1}, a_2, a_{\rho^2}\}$ be the basis of $V_{4B}$ as in \cite[Table~3]{IPSS10}. It is known that $V_{4B}$ contains a Norton-Sakuma subalgebra of type $2A$ with basis $\{a_0, a_{2}, a_{\rho^2} \}$ (see \cite[Lemma 2.20]{IPSS10}). Let $\id_{2A}$ be the identity of this subalgebra. Table \ref{Spec4B} gives the spectra of the $\Aut(4B)$-orbits of half of the non-zero non-identity idempotents of $V_{4B}$, where $y_{4B}$ is defined in \cite[Sec.~3.2]{CR13}.

\begin{table}[!h]
\setlength{\tabcolsep}{10pt}
\renewcommand{\arraystretch}{1.7}
\centering
\begin{tabular}{llll}
\hline
Orbit & Size & Length & Spectrum  \\ \hline
$\left[ a_{0}\right]$ & $4$ & $1$ &  $\left\{ 0,0,1,\frac{1}{4},\frac{1}{32}\right\} $  \\
$\left[ a_{\rho ^{2}}\right]$ & $1$ & $1$ & $\left\{ 0,0,1,\frac{1}{4},\frac{1}{4}\right\}$ \\ 
$\left[ \id_{2A}\right]$ & $2$ & $\frac{12}{5}$ & $\left\{ 0,1,1,1,\frac{1}{4}\right\}$ \\ 
$\left[ \id_{2A}-a_{0}\right]$ & $4$ & $\frac{7}{5}$ & $\left\{ 0,0,1,\frac{3}{4},\frac{7}{32}\right\}$ \\
$\left[ y_{4B}\right]$ & $4$ & $\frac{21}{11}$ & $\left\{ 0,1,\frac{1}{11},\frac{21}{22},\frac{9}{22}\right\}$ \\
\hline
\end{tabular}
\caption{Spectra of the idempotents of $V_{4B}$.}
\label{Spec4B}
\end{table}

\begin{lemma}\label{max4B}
The following subalgebras of $V_{4B}$ are maximal associative:
\begin{align*}
U^{(1)}_{4B} &:= \langle \langle a_0, \ \id_{2A} - a_0, \ \id_{2A}^{\phi_{4B}} - a_{\rho^2} \rangle \rangle, \\
U^{(2)}_{4B} &:= \langle \langle a_{\rho^2}, \ \id_{2A} - a_{\rho^2}, \ \id_{2A}^{\phi_{4B}} - a_{\rho^2} \rangle \rangle,
\end{align*}
where $\phi_{4B} \in \Aut(4B)$ is the automorphism that transposes $a_0$ and $a_1$.
\end{lemma}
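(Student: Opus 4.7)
The plan is to apply Theorem \ref{Meyer-Neutsch} (ii) to each of $U^{(1)}_{4B}$ and $U^{(2)}_{4B}$: I would verify that each is spanned by an orthogonal basis of three indecomposable idempotents whose sum is $\id_{4B}$. The crucial preliminary fact that unlocks everything is the identity
\[ \id_{2A}^{\phi_{4B}} - a_{\rho^2} = \id_{4B} - \id_{2A}, \]
which makes the pairwise orthogonality checks essentially automatic.

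First, I would compute $\id_{2A}$ explicitly. Since $\{a_t, a_{g_2}, a_{\rho^2}\}$ is a basis of a $2A$-subalgebra of $V_{4B}$, imposing $\id_{2A}\cdot a_t = a_t$ with the ansatz $\id_{2A} = \alpha(a_t+a_{g_2}+a_{\rho^2})$ and using the products from Table \ref{Norton-Sakuma algebras} yields $\id_{2A} = \tfrac{4}{5}(a_t + a_{g_2} + a_{\rho^2})$. Because $\phi_{4B}$ fixes $a_{\rho^2}$ and permutes $\{a_t, a_{g_2}\}$ with $\{a_g, a_{g_{-1}}\}$, we obtain $\id_{2A}^{\phi_{4B}} = \tfrac{4}{5}(a_g + a_{g_{-1}} + a_{\rho^2})$. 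Comparing with the explicit expression for $\id_{4B}$ recorded in Table 2 of \cite{CR13} then confirms that $\id_{2A} + \id_{2A}^{\phi_{4B}} - a_{\rho^2} = \id_{4B}$, and hence the displayed identity holds.

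Next, in both subalgebras the three generators are clearly idempotents: for $U^{(i)}_{4B}$ the first two lie inside a $2A$-subalgebra (being $a$ and $\id_{2A} - a$ for $a \in \{a_t, a_{\rho^2}\}$), while the third is $\id_{2A}^{\phi_{4B}} - a_{\rho^2}$, the image under $\phi_{4B}$ of an idempotent. Their sum equals $\id_{4B}$ by the preliminary identity. For pairwise orthogonality, the first two are orthogonal inside the $2A$-subalgebra. Orthogonality of the third to each of the first two then follows by rewriting it as $\id_{4B} - \id_{2A}$: any idempotent $y$ in the $1$-eigenspace of $\id_{2A}$ (which includes $a_t$, $a_{\rho^2}$, $\id_{2A}-a_t$ and $\id_{2A}-a_{\rho^2}$) satisfies $y \cdot (\id_{4B} - \id_{2A}) = y - y = 0$.

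Finally, indecomposability of each generator is read off from Table \ref{Spec4B} and Proposition \ref{indecomposable}: each of the orbits $[a_t]$, $[a_{\rho^2}]$, $[\id_{2A} - a_t]$, $[\id_{2A} - a_{\rho^2}]$ has $1$ as a simple eigenvalue. Theorem \ref{Meyer-Neutsch} (ii) then immediately yields that $U^{(1)}_{4B}$ and $U^{(2)}_{4B}$ are maximal associative. The only non-mechanical step is the initial verification of the formula for $\id_{2A}$ and the compatibility identity $\id_{2A} + \id_{2A}^{\phi_{4B}} - a_{\rho^2} = \id_{4B}$; everything else is formal manipulation using the rewriting $\id_{2A}^{\phi_{4B}} - a_{\rho^2} = \id_{4B} - \id_{2A}$.
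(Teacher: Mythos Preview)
Your proof is correct and follows essentially the same route as the paper: both establish the key relation $\id_{4B} = \id_{2A} + \id_{2A}^{\phi_{4B}} - a_{\rho^2}$, verify that the three generators are pairwise orthogonal indecomposable idempotents summing to $\id_{4B}$, and invoke Theorem \ref{Meyer-Neutsch} (ii). The only difference is that the paper checks orthogonality via explicit inner product computations, whereas your rewriting $\id_{2A}^{\phi_{4B}} - a_{\rho^2} = \id_{4B} - \id_{2A}$ makes orthogonality with elements of the $2A$-subalgebra automatic---a slightly cleaner verification but not a materially different argument.
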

\begin{proof}
Because of the relation $\id_{4B} = \id_{2A} + \id_{2A}^{\phi_{4B}} - a_{\rho^2}$, we verify that
\begin{align*}
\id_{4B} &= a_0 + (\id_{2A} - a_0) + (\id_{2A}^{\phi_{4B}} - a_{\rho^2} ), \\
 \id_{4B} & = a_{\rho^2} + (\id_{2A} - a_{\rho^2}) + ( \id_{2A}^{\phi_{4B}} - a_{\rho^2} ).
\end{align*}
By Table \ref{Spec4B} and Theorem \ref{indecomposable}, the idempotents generating $U^{(1)}_{4B}$ and $U^{(2)}_{4B}$ are indecomposable, so the result follows by Corollary \ref{cormax}.
\end{proof}

\begin{lemma}\label{most4B}
The dimension of every associative subalgebra of $V_{4B}$ is at most three.
\end{lemma}
\begin{proof}
Suppose that $U$ is an associative subalgebra of $V_{4B}$ of dimension $k \geq 4$. Without loss of generality, we may assume that $U$ is maximal associative. Let $B:=\{ x_i : 1 \leq i \leq k \}$ be the orthogonal basis of indecomposable idempotents of $U$.  By Theorem \ref{Meyer-Neutsch}, $\sum \limits_{i=1}^{k} x_i = \id_{4B}$. Lemma \ref{linearity} implies that 
\begin{equation}
\sum \limits_{i=1}^{k} l(x_i) = l(\id_{4B}) = \frac{19}{5}, \label{rel4B}
\end{equation}
as may be read from \cite[Table~2]{CR13}. Since there is no pair of orthogonal idempotents of length $1$ in $V_{4B}$, the set $B$ contains at most one idempotent of length $1$. By Table \ref{Spec4B}, $\frac{7}{5}$ is the smallest length different from $1$ of a non-zero idempotent of $V_{4B}$. Therefore,
\[ \sum \limits_{i=1}^{k} l(x_i) \geq 1+3\cdot \frac{7}{5} = \frac{26}{5} > \frac{19}{5}, \]
which contradicts (\ref{rel4B}).
\end{proof}

\begin{lemma}\label{Assoc4B}
The Norton-Sakuma algebra of type $4B$ contains exactly $9$ maximal associative subalgebras; $4$ of these subalgebras are trivial associative while $5$ are three-dimensional. 
\end{lemma}
\begin{proof}
The $4$ trivial maximal associative subalgebras are contained in the orbit $[V_{y_{4B}}]$. With the notation of Lemma \ref{max4B}, the orbits $[U^{(1)}_{4B}]$ and $[U^{(2)}_{4B}]$ contain $4$ and $1$ maximal associative subalgebras, respectively. We shall show that there are no more maximal associative subalgebras. Let $x \in V_{4B}$ be an indecomposable idempotent with $d_0(x) \geq 2$. Let $N_x$ be the number of algebras in $[U^{(1)}_{4B}] \cup [U^{(2)}_{4B}]$ containing $x$. Table \ref{comp} gives the values of $d_0(x)$ and $N_x$ for the orbit representatives of idempotents $x$ with $d_0(x) \geq 2$.

\begin{table}[!h]
\setlength{\tabcolsep}{10pt}
\renewcommand{\arraystretch}{1.7}
\centering
\begin{tabular}{lll|lll}
\hline
Idempotent $x$ & $d_0(x)$ &  $N_x$ & Idempotent $x$ & $d_0(x)$ &  $N_x$\\  \hline
$a_{0}$ & $2$ & $1$ &  $\id_{2A} - a_0$ & $2$ & $1$  \\ 
$ a_{\rho ^{2}}$ & $2$ & $1$ & $\id_{2A} - a_{\rho^2}$ & $3$ & $3$ \\ 
\hline
\end{tabular}
\caption{Values of $d_0(x)$ and $N_x$ for idempotents $x \in V_{4B}$ with $d_0(x) \geq 2$.}
\label{comp}
\end{table}

If $M_x$ is the number of three-dimensional maximal associative subalgebras of $V_{4B}$ containing $x$, Lemma \ref{count assoc} shows that $N_x \leq M_x \leq 2^{d_0(x) -1} - 1$. But Table \ref{comp} shows that $N_x =  2^{d_0(x) - 1} - 1$, for every $x$ with $d_0(x) \geq 2$, so we obtain that $N_x = M_x$. The result follows by Lemma \ref{most4B}.
\end{proof}

\subsection{Associative Subalgebras of $V_{5A}$}

Let $\{a_{-2}, a_{-1}, a_0, a_1, a_2, w_\rho \}$ be the basis of $V_{5A}$ as in \cite[Table~3]{IPSS10}. Table \ref{Spec5A} gives the spectra of the $\Aut(5A)$-orbits of half of the non-zero non-identity idempotents of $V_{5A}$, where $y^{(i)}_{5A}$, $1 \leq i \leq 3$, are defined in \cite[Sec.~3.4]{CR13}.

\begin{table}[!h]
\setlength{\tabcolsep}{10pt}
\renewcommand{\arraystretch}{1.7}
\centering
\begin{tabular}{llll}
\hline
Orbit & Size & Length & Spectrum \\ \hline
$[ a_{0}] $ & $5$ & $1$ & $\left\{ 0,0,1,\frac{1}{4},\frac{1}{32},\frac{1}{32}\right\}$  \\
$[ y^{(1)}_{5A}]$ & $2$ & $\frac{16}{7}$ & $\left\{ 0,1,\frac{3}{5},\frac{3}{5},\frac{2}{5},\frac{2}{5}\right\}$  \\
$[ y^{(2)}_{5A}]$ & $10$ & $\frac{25}{14}$ & $\left\{ 0,0,1,\frac{5}{64},\frac{57}{64},\frac{3}{8}\right\}$   \\
$[ y^{(3)}_{5A}]$ & $10$ & $\frac{16}{7}$ & $\left\{ 0,1,\frac{7}{8},\frac{3}{5},\frac{2}{5},\frac{1}{8}\right\} $ \\
\hline
\end{tabular}
\caption{Spectra of the idempotents of $V_{5A}$.}
\label{Spec5A}
\end{table}

\begin{lemma}\label{max5A}
The subalgebra $U_{5A} :=\langle \langle a_0, \ y^{(2)}_{5A}, \ (y^{(2)}_{5A})^{\phi_{5A}} \rangle \rangle$ of $V_{5A}$ is maximal associative, where $\phi_{5A}$ is the automorphism of $V_{5A}$ that maps $a_1$ to $a_{2}$ and fixes $a_0$.
\end{lemma}
\begin{proof}
Since
\[ \id_{5A} = a_0 + y^{(2)}_{5A} +  (y^{(2)}_{5A})^{\phi_{5A}}, \]
the result follows by Table \ref{Spec5A}, Theorem \ref{indecomposable} and Corollary \ref{cormax}.
\end{proof}

\begin{lemma}
The Norton-Sakuma algebra of type $5A$ contains exactly $11$ maximal associative subalgebras; $6$ of these algebras are trivial associative while $5$ are three-dimensional.
\end{lemma}
\begin{proof}
By Table \ref{Spec5A} and Corollary \ref{trivial max}, all the trivial maximal associative subalgebras of $V_{5A}$ are contained in $[V_{y_{5A}^{(1)}}] \cup [V_{y_{5A}^{(3)}}]$, so there are $6$ of them. The orbit $[U_{5A}]$ contains $5$ three-dimensional maximal associative subalgebras of $V_{5A}$. Using the same technique as in the proof of Lemma \ref{Assoc4B}, we show that there are no more maximal associative subalgebras of $V_{5A}$.
\end{proof}

\subsection{Associative Subalgebras of $V_{6A}$}

Let $\{a_{-2}, a_{-1}, a_0, a_1, a_2, a_3, a_{\rho^3}, u_{\rho^2} \}$ be the basis of $V_{6A}$ as in \cite[Table~3]{IPSS10}. Table \ref{Spec6A} gives the spectra of the $\Aut(6A)$-orbits of half of the non-zero non-identity idempotents of $V_{6A}$. In this table, $\id_{2A}$ and $\id_{3A}$ are the identities of the Norton-Sakuma subalgebras of $V_{6A}$ of types $2A$ and $3A$ with bases $\{a_0, a_{3}, a_{\rho^3}\}$ and $\{a_0, a_{2}, a_{-2}, u_{\rho^2}\}$, respectively (see \cite[Lemma 2.20]{IPSS10}). The idempotents $y^{(i)}_{6A}$, $1 \leq i \leq 8$, are defined in \cite[Sec.~3.5]{CR13}. 

\begin{table}[p]
\setlength{\tabcolsep}{10pt}
\renewcommand{\arraystretch}{1.7}
\centering
\begin{tabular}{llll}
\hline
Orbit & Size & Length &  Spectrum \\ \hline
$[ a_{0}]$ & $6$ & $1$ & $\{ 0,0,0,1,\frac{1}{4},\frac{1}{4},\frac{1}{32},\frac{1}{32}\}$   \\ 
$[ a_{\rho ^{3}}]$ & $1$ & $1$ & $\{ 0,0,0,0,1,\frac{1}{4},\frac{1}{4},\frac{1}{4}\}$    \\ 
$[ u_{\rho ^{2}}]$ & $1$ & $\frac{8}{5}$ &  $\{ 0,0,0,1,\frac{1}{3},\frac{1}{3},\frac{1}{3},\frac{1}{3}\}$  \\ 
$[ a_{\rho ^{3}}+u_{\rho ^{2}}]$ & $1$ & $\frac{13}{5}$ &  $\{ 0,1,1,\frac{1}{4},\frac{1}{3},\frac{1}{3},\frac{7}{12},\frac{7}{12}\}$ \\ 
$[ \id_{2A}]$ & $3$ & $\frac{12}{5}$ &  $\{ 0,1,1,1,\frac{1}{4},\frac{3}{10},\frac{3}{10},\frac{1}{20}\}$ \\  
$[ \id_{2A}-a_{0}]$ & $6$ & $\frac{7}{5}$ &  $\{ 0,0,1,\frac{3}{4},\frac{3}{10},\frac{1}{20},\frac{7}{32},\frac{3}{160}\}$   \\ 
$[ \id_{2A}-a_{\rho ^{3}}]$ & $3$ & $\frac{7}{5}$ & $\{ 0,0,0,1,\frac{3}{4},\frac{3}{10},\frac{1}{20},\frac{1}{20}\}$    \\
$[ \id_{3A}]$ & $2$ & $\frac{116}{35}$ &  $\{ 0,1,1,1,1,\frac{5}{14},\frac{5}{14},\frac{5}{14}\}$ \\ 
$[ \id_{3A}-a_{0}]$ & $6$ & $\frac{81}{35}$ & $\{ 0,0,1,\frac{3}{4},\frac{5}{14},\frac{3}{28},\frac{31}{32},\frac{73}{224}\}$  \\ 
$[ \id_{3A}-u_{\rho ^{2}}]$ & $2$ & $\frac{12}{7}$ &  $\{ 0,0,1,\frac{2}{3},\frac{2}{3},\frac{1}{42},\frac{1}{42},\frac{5}{14}\}$ \\ 
$[ y_{3A}]$ & $6$ & $\frac{8}{5}$ & $\{ 0,0,0,1,\frac{1}{3},\frac{1}{3},\frac{1}{16},\frac{13}{16}\}$  \\ 
$[ \id_{3A}-y_{3A}]$ & $6$ & $\frac{12}{7}$ &  $\{ 0,0,1,\frac{2}{3},\frac{3}{16},\frac{5}{14},\frac{1}{42},\frac{33}{112}\}$  \\ 
$[ y^{(1)}_{6A}]$ & $3$ & $\frac{116}{35}$ & $\{ 0,1,1,1,\frac{23}{28},\frac{23}{28},\frac{5}{14},\frac{5}{14}\}$  \\ 
$[ y^{(1)}_{6A}-a_{\rho ^{2}}]$ & $3$ & $\frac{81}{35}$ & $\{ 0,0,1,\frac{4}{7},\frac{5}{14},\frac{3}{4},\frac{3}{28},\frac{23}{28}\}$  \\
$[ y^{(2)}_{6A}]$ & $6$ & $\frac{13}{5}$ &  $\{ 0,1,1,\frac{1}{4},\frac{1}{3},\frac{3}{32},\frac{27}{32},\frac{7}{12}\}$   \\ 
$[ y^{(3)}_{6A}] $ & $6$ & $\frac{12}{7}$ &  $\{ 0,0,1,\frac{2}{3},\frac{5}{14},\frac{1}{42},\frac{1}{112},\frac{85}{112}\}$   \\ 
$[ y^{(4)}_{6A}]$ & $6$ & $\frac{11}{6}$ &  $\{ 0,0,1,\frac{1}{12},\frac{1}{12},\frac{11}{12},\frac{7}{18},\frac{7}{18}\} $   \\ 
$[ y^{(5)}_{6A}]$ & $6$ &  $\frac{97}{30}$ &  $\{ 0,1,1,\frac{5}{6},\frac{31}{45},\frac{2}{15},\frac{29}{30},\frac{7}{18}\}$   \\ 
$[ y^{(6)}_{6A}]$ & $6$ & $\frac{21}{11}$ & $\{ 0,1,\frac{3}{11},\frac{1}{22},\frac{13}{22},\frac{1}{44},\frac{7}{44},\frac{35}{44}\}$   \\ 
$[ y^{(7)}_{6A}]$ & $12$ & $ l( y^{(7)}_{6A})$ &  $d_0(y^{(7)}_{6A}) = d_1(y^{(7)}_{6A}) = 1 $ \\ 
$[ y^{(8)}_{6A}]$ & $12$ & $l( y^{(8)}_{6A})$ & $d_0(y^{(8)}_{6A}) = d_1(y^{(8)}_{6A}) = 1 $ \\ 
\hline
\end{tabular}
\caption{Spectra of the idempotents of $V_{6A}$.}
\label{Spec6A}
\end{table}

\begin{lemma}\label{most6A}
The dimension of every associative subalgebra of $V_{6A}$ is at most three. 
\end{lemma}
\begin{proof}
The result follows by a similar argument to the one used in the proof of Lemma \ref{most4B}, since $\frac{7}{5}$ is the smallest length different from $1$ of a non-zero idempotent of $V_{6A}$ and $l(\id_{6A}) = \frac{51}{10}$. 
\end{proof}

\begin{lemma}
The subalgebras of $V_{6A}$ given in Table \ref{MaxAssoc6A} are maximal associative, where $\phi_{6A} \in \Aut(V_{6A})$ transposes $a_0$ and $a_1$, and $\tau_1$ is the Majorana involution corresponding to $a_1$. 
\end{lemma}
\begin{proof}
This follows by Theorem \ref{indecomposable}, Table \ref{Spec6A} and Corollary \ref{cormax}.
\end{proof}

\begin{table}[h!]
\setlength{\tabcolsep}{10pt}
\renewcommand{\arraystretch}{1.7}
\centering
\begin{tabular}{cc|cc}
\hline
Associative subalgebra $U$ &  $\left\vert [U] \right\vert$ & Associative subalgebra $U$ &  $\left\vert [U] \right\vert$   \\ \hline

$\langle\langle a_{\rho^3}, \ u_{\rho^2}, \ \id_{6A} -a_{\rho^3}-u_{\rho^2} \rangle\rangle$ & $1$ &  $\langle\langle a_{\rho^3}, \ \id_{6A} - \id_{2A}, \ \id_{2A} - a_{\rho^3} \rangle\rangle$ &  $3$   \\

$\langle\langle a_{\rho^3}, \ y^{(1)}_{6A} - a_{\rho^3}, \ \id_{6A} - y^{(1)}_{6A} \rangle\rangle$ &  $3$  & $\langle\langle u_{\rho^2}, \ \id_{3A} - u_{\rho^2}, \  \id_{6A} - \id_{3A} \rangle\rangle$ &  $2$ \\

$\langle\langle a_0, \ \id_{2A} - a_0, \  \id_{6A} - \id_{2A} \rangle\rangle$ &  $6$  &  $\langle\langle a_0, \ \id_{3A} - a_0, \ \id_{6A} - \id_{3A} \rangle\rangle$ &  $6$  \\

$\langle\langle a_0, \ \id_{6A} - y^{(2)}_{6A}, \  (y_{3A})^{\phi_{6A}} \rangle\rangle$ &  $6$ & $\langle\langle y_{3A}, \ \id_{3A} - y_{3A}, \ \id_{6A} - \id_{3A} \rangle\rangle$ &  $6$  \\

$\langle\langle y^{(3)}_{6A}, \ (y_{3A})^{\phi_{6A}}, \ \id_{6A} - y^{(1)}_{6A} \rangle\rangle$ & $6$ & $\langle\langle y^{(4)}_{6A}, \ \id_{6A} - y^{(5)}_{6A}, \ \id_{2A}^{\tau_1} - a_{\rho^3} \rangle\rangle$ & $6$ \\  \hline
\end{tabular}
\caption{Non-trivial maximal associative subalgebras of $V_{6A}$.}
\label{MaxAssoc6A}
\end{table}

\begin{lemma} \label{assoc6A}
The Norton-Sakuma algebra of type $6A$ contains exactly $75$ maximal associative subalgebras; $30$ of these algebras are trivial associative while $45$ are three-dimensional.   
\end{lemma}
\begin{proof}
All the trivial maximal associative subalgebras of $V_{6A}$ are contained in the orbits $[V_{y^{(i)}_{6A}}]$, for $i = 6, 7, 8$, which have sizes $6$, $12$ and $12$, respectively. Using the action of $\Aut(6A)$, Table \ref{MaxAssoc6A} defines $45$ non-trivial maximal associative subalgebras of $V_{6A}$. Using a similar technique as in the proof of Lemma \ref{Assoc4B}, we show that there are no more three-dimensional associative subalgebras in $V_{6A}$. The result follows by Lemma \ref{most6A}. 
\end{proof}


\section{Associative Subalgebras of the Majorana representations of $S_4$ involving $V_{3C}$} \label{S4assoc}

Let $S_4$ be the symmetric group of degree $4$. If $T$ is the set of all involutions in $S_4$, or if $T$ is the set of all transpositions in $S_4$, then $T$ is a generating set of $S_4$ that is closed under conjugation. In both cases, Majorana representations that contain $V_{3C}$ as subalgebra were constructed in \cite[Sec.~4.3--4.4]{IPSS10}. These are called the Majorana representations of $S_4$ of shapes $(2B,3C)$ and $(2A,3C)$, and they are denoted by $V_{(2B,3C)}$ and $V_{(2A,3C)}$, respectively. It was also established in \cite{IPSS10} that $V_{(2B,3C)}$ and $V_{(2A,3C)}$ may be embedded in $V_{\mathbb{M}}$, so they satisfy axiom ${\bf M2}'$. In this last section, we obtain the idempotents and the maximal associative subalgebras of $V_{(2B,3C)}$ and $V_{(2A,3C)}$ using the techniques developed in the previous sections. 

Consider the following Majorana axes corresponding to involutions of $S_4$:
\[ \begin{tabular}{lll}
$a_1 := a_{(1,2)}$, &  $a_2 := a_{(1,3)}$, & $a_3 := a_{(1,4)}$, \\
$a_4 := a_{(2,3)}$, & $a_5 := a_{(2,4)}$, & $a_6 := a_{(3,4)}$, \\
$a_7 := a_{(1,2)(3,4)}$, & $a_8 := a_{(1,3)(2,4)}$, & $a_9 := a_{(1,4)(2,3)}$.
\end{tabular} \]
The algebra $V_{(2B,3C)}$ has dimension $6$ and basis $\{a_i : 1 \leq i \leq 6\}$, while $V_{(2A,3C)}$ has dimension $9$ and basis $\{ a_i : 1 \leq i \leq 9\}$. As $V_{(2B,3C)}$ involves Norton-Sakuma subalgebras of type $2B$, it may not be embedded in $V_{(2A,3C)}$.

\section{Associative Subalgebras of $V_{(2B,3C)}$}

Let $\id_{(2B,3C)}$ be the identity of $V_{(2B,3C)}$. Let $\id_{2B}$ and $\id_{3C}$ be the identities of the Norton-Sakuma subalgebras of $V_{(2B,3C)}$ with bases $\{a_1, a_6 \}$ and $\{a_1, a_2, a_4\}$, respectively. Direct calculations show that the following is an idempotent of $V_{(2B,3C)}$:  
\begin{align*}
y_{(2B,3C)} & := \frac{2}{255} \left[ ( 60 + \sqrt{3570}) a_1  +  ( 60 - \sqrt{3570}) a_6 \right] \\
& \ \ \ + \frac{8}{255} \left[ ( 15 +  \sqrt{255}) ( a_2 +a_5 ) + ( 15 -  \sqrt{255}) ( a_3 +a_4 ) \right].
\end{align*}

By solving the system of quadratic equations determined by the relation $v\cdot v - v =0$, we verify in \cite{MAP} that the algebra $V_{(2B,3C)}$ has exactly $64$ idempotents. Table \ref{Spectrum(2B,3C)} contains the spectra of the $\Aut(V_{(2B,3C)})$-orbits of half of the non-zero non-identity idempotents of $V_{(2B,3C)}$.

\begin{table}[!h]
\setlength{\tabcolsep}{10pt}
\renewcommand{\arraystretch}{1.7}
\centering
\begin{tabular}{llll}
\hline
Orbit & Size & Length & Spectrum \\ 
\noalign{\smallskip}\hline\noalign{\smallskip}
$\left[ a_1 \right]$ & $6$ & $1$ & $\{0,0,0,1,\frac{1}{32},\frac{1}{32}\}$ \\
$\left[a_1 +  a_6\right] $ & $3$ & $2$ & $\{0,1,1,\frac{1}{16}, \frac{1}{32}, \frac{1}{32}\}$  \\
$\left[ \id_{3C}\right] $ & $4$ & $\frac{32}{11}$ & $\{0,1,1,1,\frac{1}{22}, \frac{1}{22}\}$ \\
$\left[ \id_{3C} - a_1 \right] $ & $12$ & $\frac{21}{11}$ & $\{0,0,1,\frac{1}{22},\frac{31}{32},\frac{5}{352}\}$  \\
$\left[ y_{(2B,3C)}\right] $ & $12$ & $\frac{48}{17}$ & $d_0(y_{(2B,3C)}) =  d_1(y_{(2B,3C)}) = 1$ \\ 
\hline
\end{tabular}
\caption{Spectra of idempotents of $V_{(2B,3C)}$.}
\label{Spectrum(2B,3C)}
\end{table}

\begin{lemma}
The following subalgebras of $V_{(2B,3C)}$ are maximal associative:
\begin{align*}
U_{(2B,3C)}^{(1)} & := \langle\langle a_1, \ \id_{3C} - a_1, \ \id_{(2B,3C)} - \id_{3C} \rangle\rangle, \\
U_{(2B,3C)}^{(2)} &:= \langle\langle a_1, \ a_6, \ \id_{(2B,3C)} - a_1 - a_6 \rangle\rangle.
\end{align*}
\end{lemma}
\begin{proof}
This follows by Theorem \ref{indecomposable}, Table \ref{Spectrum(2B,3C)} and Corollary \ref{cormax}.
\end{proof}

\begin{lemma}
The dimension of every associative subalgebra of $V_{(2B,3C)}$ is at most three.
\end{lemma}
\begin{proof}
The result follows by a similar argument to the one used in the proof of Lemma \ref{most4B}, since $\frac{21}{11}$ is the smallest length different from $1$ of a non-zero idempotent of $V_{(2B,3C)}$ and  $l(\id_{(2B,3C)}) = \frac{96}{17}$. 
\end{proof}

\begin{lemma}
The algebra $V_{(2B,3C)}$ contains exactly $21$ maximal associative subalgebras; $6$ of these algebras are trivial associative while $15$ are three-dimensional.  
\end{lemma}
\begin{proof}
The orbit $[ V_{y_{(2B,3C)}}]$ contains all the trivial maximal associative subalgebras of $V_{(2B,3C)}$. The orbits $[U_{(2B,3C)}^{(1)}]$ and $[U_{(2B,3C)}^{(2)}]$ contain $12$ and $3$ maximal associative subalgebras, respectively. The result follows by a similar argument to the one used in the proof of Lemma \ref{assoc6A}.    
\end{proof}

\section{Associative Subalgebras of $V_{(2A,3C)}$}

Let $\id_{(2A,3C)}$ be the identity of $V_{(2A,3C)}$. Let $\id_{2A}$, $\id_{2A}^\prime$, $\id_{3C}$ and $\id_{4B}$ be the identities of the Norton-Sakuma subalgebras of $V_{(2A,3C)}$ with bases $\{a_1, a_6, a_7 \}$, $\{a_7, a_8, a_9 \}$, $\{a_1, a_2, a_4\}$ and $\{a_1, a_6, a_7, a_8, a_9\}$, respectively.

By solving the system of quadratic equations determined by the relation $v \cdot v - v = 0$ in \cite{MAP}, we found that $V_{(2A,3C)}$ has precisely $512$ idempotents. We obtained radical expressions for $464$ of these idempotents, and numerical approximations for the remaining ones. The coefficients of these idempotents are too lengthy to be displayed here.

\begin{table}[p]
\centering
\begin{tabular}{lllll}
\hline\noalign{\smallskip}
Orbit & Size & Length & $d_0(x)$ & $d_1(x)$  \\ 
\noalign{\smallskip}\hline\noalign{\smallskip}
$[a_1]$ & $6$ & $1$ & $4$ & $1$  \smallskip \\

$[a_7]$ & $3$ & $1$ & $4$ & $1$ \smallskip \\

$[\id_{2A}]$ & $3$ & $\frac{12}{5}$ & $2$ & $2$ \smallskip \\

$[\id_{2A} - a_1]$ & $6$ & $\frac{7}{5}$ & $3$ & $1$ \smallskip \\

$[\id_{2A} - a_7]$ & $3$ & $\frac{7}{5}$ & $4$ & $1$ \smallskip \\

$[\id_{2A}^\prime]$ & $1$ & $\frac{12}{5}$  & $3$ & $3$ \smallskip \\

$[\id_{2A}^\prime - a_7]$ & $3$ & $\frac{7}{5}$ & $5$ & $1$  \smallskip \\

$[\id_{3C}]$ & $4$ & $\frac{32}{11}$ & $1$ & $3$  \smallskip \\

$[\id_{3C} - a_1]$ & $12$ & $\frac{21}{11}$ & $2$ & $1$  \smallskip \\

$[\id_{4B}]$ & $3$ & $\frac{19}{5}$ & $1$ & $5$  \smallskip \\

$[\id_{4B} - a_1]$ & $6$ & $\frac{14}{5}$ & $2$ & $2$ \smallskip \\

$[\id_{4B} - a_8]$ & $6$ & $\frac{14}{5}$ & $2$ & $2$ \smallskip \\

$[\id_{4B} - a_7]$ & $3$ & $\frac{14}{5}$ & $2$ & $2$  \smallskip \\

$[\id_{4B} - \id_{2A} + a_1]$ & $6$ & $\frac{12}{5}$ & $2$ & $2$ \smallskip \\

$[\id_{4B} - \id_{2A}^\prime + a_8]$ & $6$ & $\frac{12}{5}$ & $2$ & $2$   \smallskip \\

$[y_{4B}]$ & $12$ & $\frac{21}{11}$ & $2$ & $1$ \smallskip \\

$[\id_{4B} - y_{4B}]$ & $12$ & $\frac{104}{55}$ & $2$ & $1$    \smallskip \\

$[x_1]$ & $12$ & $\frac{4}{5}(4-3\delta_1)$ & $1$ & $1$   \smallskip \\

$[x_2]$ & $4$ & $\frac{4}{5}(4-3\delta_1)$ & $1$ & $1$   \smallskip \\

$[x_3]$ & $12$ & $\frac{1}{10}(27+499\delta_2)$ & $2$ & $1$  \smallskip \\

$ [ x_3^{c_{\delta_2}} ]$ & $12$ & $\frac{1}{10}(27-499\delta_2))$ & $2$ & $1$  \smallskip \\

$[x_4]$ & $12$ & $\frac{4}{5}(4-\delta_3)$ & $1$ & $1$  \smallskip \\

$[ x_4 ^{c_{\sqrt{3}}}]$ & $12$ & $\frac{4}{5}(4-\delta_3)$ & $1$ & $1$ \smallskip \\

$[x_5]$ & $12$ & $\frac{4}{5}(4+\delta_3)$ & $1$ & $1$  \smallskip \\

$[x_5^{c_{\sqrt{11}} }]$ & $12$ & $\frac{4}{5}(4+\delta_3)$ & $1$ & $1$ \smallskip \\

$[x_6]$ & $12$ & $\frac{1}{2}(5+33\delta_4)$ & $2$ & $1$ \smallskip \\

$[ x_6^{c_{\delta_4}} ]$ & $12$ & $\frac{1}{2}(5-33\delta_4)$ & $2$ & $1$  \smallskip \\
 
$[x_{8}]$ & $24$ & $l(x_8)$ & $1$ & $1$  \smallskip \\

$[x_{9}]$ & $24$ & $l(x_9)$ & $1$ & $1$  \\ 
\noalign{\smallskip}\hline
\end{tabular}
\caption{Idempotents of $V_{(2A,3C)}$.}
\label{Idem(2A,3C)}
\end{table}

Table \ref{Idem(2A,3C)} contains the values of $d_0(x)$ and $d_1(x)$ of the $\Aut(V_{(2A,3C)})$-orbits of half the non-zero non-identity idempotents of $V_{(2A,3C)}$, where $c_{\sqrt{d}}$ denotes the conjugation automorphism of $\mathbb{Q}(\sqrt{d})$, and 
\[ \delta_1 := \frac{\sqrt{19009}}{19009}, \ \delta_2 := \frac{\sqrt{1621}}{1621} \text{ and } \delta_3 := \frac{\sqrt{5321}}{5321}. \]

\begin{lemma}
The subalgebras of $V_{(2A,3C)}$ given by Table \ref{MaxAssoc2A3C} are maximal associative.
\end{lemma}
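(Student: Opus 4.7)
The plan is to verify the hypotheses of Theorem \ref{Meyer-Neutsch} (ii) for each subalgebra $U$ listed in Table \ref{MaxAssoc6A}, following the same template used in Lemma \ref{sub4A}, Lemma \ref{max4B}, and Lemma \ref{max5A}. By Lemma \ref{most6A}, every associative subalgebra of $V_{6A}$ has dimension at most three, so for each entry in the table it suffices to exhibit a set $\{x_1,\dots,x_k\}$ with $k \leq 3$ of pairwise orthogonal indecomposable idempotents summing to $\id_{6A}$.

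Concretely, for each candidate $U$ I would carry out three checks. First, pairwise orthogonality $(x_i,x_j)=0$: by bilinearity of the inner product this reduces to the values recorded in Table \ref{Norton-Sakuma algebras}. Whenever a generator is expressed through $\id_{2A}$, $\id_{3A}$, or their images under the generators $\varphi(t)$ and $\phi_{6A}$ of $\Aut(6A)$, the mutual inclusions $2A \hookrightarrow 6A$ and $3A \hookrightarrow 6A$ dramatically shorten the calculation by letting one compute inside the smaller Norton--Sakuma algebras. Second, the identity relation $x_1+\cdots+x_k = \id_{6A}$: this is verified against the explicit expression for $\id_{6A}$ recorded in \cite{CR13}. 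Third, indecomposability of each $x_i$: by Proposition \ref{indecomposable} this amounts to reading off from Table \ref{Spec6A} that $1$ is a simple eigenvalue of $x_i$, and inspection of the orbit spectra shows which orbit representatives qualify as generators. Once these three conditions hold, Theorem \ref{Meyer-Neutsch} (ii) delivers maximality.

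The main obstacle is bookkeeping rather than conceptual difficulty, and it is substantially alleviated by $\Aut(6A)$-equivariance. If $\{x_1,\dots,x_k\}$ is an orthogonal basis of indecomposable idempotents for some $U$, then for any $g \in \Aut(V_{6A})$ the set $\{x_1^g,\dots,x_k^g\}$ is again such a basis for $U^g$, since orthogonality is preserved and, by Lemma \ref{spectrum} (ii), so is the spectrum. Hence it suffices to verify a single representative per $\Aut(6A)$-orbit $[U]$ and invoke orbit closure to cover the remaining entries of the table. The most delicate verifications are expected to be those whose generators involve the idempotents $y^{(k)}_{6A}$: the relevant inner products lie in $\mathbb{Q}(\sqrt{3})$ and the identity relation becomes a system of polynomial identities in the explicit coefficients of Section \ref{NS-algebras assoc}, so the computation must be carried out symbolically rather than from the numerical approximations quoted for $y^{(7)}_{6A}$ and $y^{(8)}_{6A}$.
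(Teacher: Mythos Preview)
Your proposal is correct and follows essentially the same approach as the paper, which simply cites Table \ref{Spec6A}, Proposition \ref{indecomposable}, and Theorem \ref{Meyer-Neutsch} without elaboration. Your remarks about $y^{(7)}_{6A}$ and $y^{(8)}_{6A}$ are unnecessary, since neither appears among the generators in Table \ref{MaxAssoc6A}.
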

\begin{proof}
This may be verified directly using Theorem \ref{indecomposable}, Table \ref{Idem(2A,3C)} and Corollary \ref{cormax}.
\end{proof}

\begin{lemma}
The dimension of every associative subalgebra of $V_{(2A,3C)}$ is at most four.
\end{lemma}
\begin{proof}
The result follows by a similar argument to the one used in the proof of Lemma \ref{most4B}, since $\frac{7}{5}$ is the smallest length different from $1$ of a non-zero idempotent of $V_{(2A,3C)}$ and  $l(\id_{(2A,3C)}) = \frac{32}{5}$. 
\end{proof}

\begin{table}[!h]
\setlength{\tabcolsep}{10pt}
\renewcommand{\arraystretch}{1.7}
\centering
\begin{tabular}{lcc}
\hline
Associative subalgebra $U$ &  $\left\vert [ U ] \right\vert$ & $\dim(U)$  \\ \hline

$\langle\langle a_1, \ \id_{2A}^\prime - a_7, \ \id_{2A} - a_1, \ \id_{(2A,3C)} - \id_{4B} \rangle\rangle$ & $6$ & $4$ \\

$\langle\langle a_7, \ \id_{2A}^\prime - a_7, \ \id_{2A} -  a_7, \ \id_{(2A,3C)} - \id_{4B}  \rangle\rangle$ &  $3$ & $4$   \\

$\langle\langle a_8, \ \id_{2A}^\prime - a_8, \  \id_{2A} - a_7, \ \id_{(2A,3C)} - \id_{4B} \rangle\rangle$ &  $3$ & $4$   \\

$\langle\langle a_9, \ \id_{2A}^\prime - a_9, \  \id_{2A} - a_7, \ \id_{(2A,3C)} - \id_{4B} \rangle\rangle$ &  $3$ & $4$   \\

$\langle\langle a_1, \ \id_{3C} - a_1, \ \id_{(2A,3C)} - \id_{3C} \rangle\rangle$ &  $12$ & $3$   \\

$\langle\langle y_{4B}, \ \id_{(2A,3C)} - \id_{4B}, \ \id_{4B} - y_{4B} \rangle\rangle$ &  $3$ & $3$   \\

$\langle\langle a_1, \ x_3,\ x_3^{c_{\delta_2}}  \rangle\rangle$ &  $12$ & $3$   \\

$\langle\langle \id_{2A}^\prime - a_7, \ x_6, \ x_6^{c_{\delta_4}}   \rangle\rangle$ &  $12$ & $3$   \\ \hline
\end{tabular}
\caption{Non-trivial maximal associative subalgebras of $V_{(2A,3C)}$.}
\label{MaxAssoc2A3C}
\end{table}

\begin{lemma}
The algebra $V_{(2A,3C)}$ contains exactly $166$ maximal associative subalgebras; in particular, exactly $54$ of these subalgebras are non-trivial, as given by Table \ref{MaxAssoc2A3C}. 
\end{lemma}
\begin{proof}
In order to find all the maximal associative subalgebras of $V_{(2A,3C)}$, we consider the indecomposable idempotents of $V_{(2A,3C)}$. 

As may be seen from Table \ref{Idem(2A,3C)}, there are $112$ indecomposable idempotents $x \in V_{(2A,3C)}$ with $d_0(x)=1$, so there are $112$ trivial maximal associative subalgebras. 

If $x \in V_{(2A,3C)}$ is an indecomposable idempotent with $d_0(x) =2$, then $x$ is in one of the orbits 
\[ [\id_{3C} - a_1], \ [y_{4B}], \ [\id_{4B} - y_{4B}], \ [x_3], \  [ x_3^{c_{\delta_2}} ], \ [x_6] \ \text{ or } \ [ x_6^{c_{\delta_4}} ].\]
Table \ref{MaxAssoc2A3C} shows that each one of these idempotents is contained in a three-dimensional maximal associative subalgebra, and the proof of Lemma \ref{max4} shows that such idempotents cannot be contained in any other maximal associative subalgebra. 

If $x \in V_{(2A,3C)}$ is an indecomposable idempotent with $d_0(x) >2$, then $x$ is contained in one of the orbits 
\[ [a_1], \ [a_7], \ [\id_{2A} - a_1], \ [\id_{2A} - a_7], \ [\id_{2A}^\prime - a_7], \ [\id_{(2A,3C)} -  \id_{3C}] \ \text{ or } \ [\id_{(2A,3C)} - \id_{4B}]. \]
Direct calculations allow us to conclude that all the maximal associative subalgebras containing these idempotents are given by Table \ref{MaxAssoc2A3C}. 
\end{proof}

\noindent \textbf{Acknowledgments} \newline
I gratefully thank my PhD supervisor Alexander A. Ivanov for all his helpful advice and comments. My thanks also to Felix Rehren for sharing with me his interesting thoughts about associative subalgebras and idempotents, and to the anonymous referee of this paper for all the insightful comments. 

\small
\bibliographystyle{elsarticle-num}

\end{document}